\documentclass[12pt,headings=big]{scrartcl}
 \usepackage{cmap}
 \usepackage{amsmath,amsxtra,amscd,amssymb,latexsym,stmaryrd,amsthm,lipsum}
 \usepackage{mathrsfs, calc, xcolor, array, graphicx, subcaption}

\allowdisplaybreaks

\usepackage[colorlinks=true,linkcolor=red!70!black,citecolor=green!70!black,
    urlcolor=magenta!70!black,backref]{hyperref}

  \usepackage{lmodern} 
  \usepackage[utf8]{inputenc}
  \usepackage[T1]{fontenc}

\addtokomafont{title}{\rmfamily\itshape}

\theoremstyle{plain}
\newtheorem{theo}{Theorem}[section]
\newtheorem*{theo*}{Theorem}
\newtheorem{coro}[theo]{Corollary}
\newtheorem{prop}[theo]{Proposition}
\newtheorem{lemm}[theo]{Lemma}
\newtheorem{theomain}{Theorem}

\newtheorem{hypo}{Assumption}

\theoremstyle{definition}
\newtheorem{defi}[theo]{Definition}

\newcommand*{\dd}%
  {\relax\ifnum\lastnodetype>0\mskip\medmuskip\fi\mathrm{d}}
\newcommand{\fspace}[1]{\mathscr{#1}}
\newcommand{\fX}{\fspace{X}}

\newcommand{\op}[1]{\mathrm{#1}}
\newcommand{\one}{\boldsymbol{1}}

\newcommand{\mchain}{\mathsf}

\newcommand{\pr}{\operatorname{\mathbb{P}}}

\newcommand{\Lip}{\operatorname{Lip}}
\newcommand{\BV}{\operatorname{BV}}
\newcommand{\var}{\operatorname{var}}

\newcommand{\diam}{\operatorname{\mathrm{diam}}}

%
\newlength{\hypobox}
\setlength{\hypobox}{\textwidth}
\addtolength{\hypobox}{-3em}

\newlength{\gapbox}
\setlength{\gapbox}{\textwidth}
\addtolength{\gapbox}{-3.5em}


\newcounter{hypop}
\renewcommand{\thehypop}{\textup{(H\arabic{hypop}')}}

\newcounter{gap}
\renewcommand{\thegap}{\textup{(SG\arabic{gap})}}

\title{Toy examples for effective concentration bounds}

\author{Beno\^{\i}t R. Kloeckner \thanks{Universit\'e Paris-Est, Laboratoire d'Analyse et de Mat\'ematiques Appliqu\'ees (UMR 8050), UPEM, UPEC, CNRS, F-94010, Cr\'eteil, France}}

\begin{document}

\maketitle

\begin{abstract}
In this note we prove a spectral gap for various Markov chains on various functional spaces. While proving that a spectral gap exists is relatively common, explicit estimates seems somewhat rare.

These estimates are then used to apply the concentration inequalities of \cite{K:concentration} (most of the present material was part of Section 3 of that article, which has been reduced to its core in the published version).
\end{abstract}

Let us recall briefly the notation and concentration inequalities from \cite{K:concentration}.

Let $(X_k)_{k\ge 0}$ be a Markov chain taking value in a general state space $\Omega$ with a unique stationary measure $\mu_0$, and let $\varphi:\Omega \to \mathbb{R}$ be a function (the ``observable''). We are interested in the speed of the convergence of the empirical average 
$\hat\mu_n(\varphi) := \frac1n \sum_{k=1}^n \varphi(X_k)$
to $\mu_0(\varphi)$. We denote by $\mu$ the law of $X_0$, which can be arbitrary. 

\begin{hypo}\label{hypo:X} The observable $\varphi$ belongs to a function space $\fX$ satisfying
\begin{enumerate}
\item its norm  $\lVert\cdot \rVert$ dominates the uniform norm: $\lVert\cdot \rVert\ge \lVert\cdot \rVert_\infty$,
\item $\fspace{X}$ is a Banach algebra, i.e. for all $f,g\in \fspace{X}$ we have $\lVert fg\rVert \le \lVert f\rVert \lVert g \rVert$,
\item $\fspace{X}$ contains the constant functions and
$\lVert\one\rVert= 1$ (where $\one$ denotes the constant function with value $1$).
\end{enumerate}
\end{hypo}

To the transition kernel $\mchain{M}$ is associated an averaging operator acting on $\fspace{X}$:
\[\op{L}_0 f(x) = \int_\Omega f(y) \dd m_{x}(y).\]
Since each $m_x$ is a probability measure, $\op{L}_0$ has $1$ as eigenvalue, with eigenfunction $\one$. 

\begin{hypo}\label{hypo:L}
The Markov chain $\mchain{M}$ satisfies the following:
\begin{enumerate}
\item $\op{L}_0$ acts as a bounded operator from $\fspace{X}$ to itself, and its operator norm $\lVert \op{L}_0\rVert$ is equal to $1$.
\item $\op{L}_0$ is contracting with gap $\delta_0>0$, i.e. there is a closed hyperplane $G_0 \subset \fspace{X}$ such that
\[ \lVert \op{L}_0 f \rVert \le (1-\delta_0) \lVert f\rVert \qquad \forall f\in G_0.\]
\end{enumerate}
\end{hypo}
The second hypothesis is a particular case of a \emph{spectral gap}: it implies in particular that $1$ is a simple isolated eigenvalue.

In \cite{K:concentration} the following two results where proved (plus a Berry-Esseen bound that we will not use here).
\begin{theomain}\label{theo:main-conc}
Assuming assumptions \ref{hypo:X} and \ref{hypo:L}, for all $n\ge 1+\frac{\log 100}{-\log(1-\delta_0/13)}$ it holds:
\[ \pr_\mu\Big[\lvert\hat\mu_n(\varphi)-\mu_0(\varphi)\rvert\ge a\Big]    \le \begin{cases}\displaystyle
2.488 \exp\Big(-n  \frac{\delta_0}{13.44\delta_0+8.324}  \frac{a^2}{\lVert\varphi\rVert^2}\Big)
 & \displaystyle \mbox{if }\frac{a}{\lVert \varphi\rVert} \le \frac{\delta_0}{3} \\[6\jot]
 \displaystyle
2.624 \exp\Big( -n\frac{0.98 \delta_0^2}{12+13\delta_0} \Big(\frac{a}{\lVert\varphi\rVert}-0.254\delta_0\Big) \Big)
 &\mbox{otherwise.}
\end{cases}\]
\end{theomain}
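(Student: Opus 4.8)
The plan is to follow the Cramér–Chernoff strategy, computing the exponential moments via the transfer operator and controlling them with the spectral gap of Hypothesis~\ref{hypo:L}. Writing $\bar\varphi = \varphi - \mu_0(\varphi)\one$ and $S_n = \sum_{k=1}^n \bar\varphi(X_k)$, I would first express, for $s\ge 0$, the moment generating function as an iterate of a twisted operator. Using the Markov property together with the definition of $\op{L}_0$, one obtains
\[ \esp_\mu\big[e^{sS_n}\big] = \mu\big(\op{L}_s^n\one\big), \qquad \op{L}_s f := \op{L}_0\big(e^{s\bar\varphi}f\big). \]
Since $\lVert\cdot\rVert\ge\lVert\cdot\rVert_\infty$ and $\mu$ is a probability measure, $\esp_\mu[e^{sS_n}]\le \lVert\op{L}_s^n\one\rVert$, so everything reduces to controlling the growth of $\op{L}_s^n$ on $\fX$.

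The technical heart is an \emph{effective} perturbation of the spectral gap. Using the Banach algebra property (Hypothesis~\ref{hypo:X}) I would bound the perturbation through $\lVert e^{s\bar\varphi}-\one\rVert\le e^{s\lVert\bar\varphi\rVert}-1$, and show that for $s$ below an explicit threshold $s_{\max}$ the operator $\op{L}_s$ still carries a simple dominant eigenvalue $\lambda(s)$, analytic in $s$, with $\lambda(0)=1$ and a persisting gap inherited from the hyperplane contraction $\lVert\op{L}_0 g\rVert\le(1-\delta_0)\lVert g\rVert$. The key estimate is a quadratic bound $\log\lambda(s)\le \tfrac12 V s^2$ for an explicit variance proxy $V$ (the centering $\mu_0(\bar\varphi)=0$ kills the linear term $\lambda'(0)$), together with a bound $\lVert\op{L}_s^n\one\rVert\le C\,\lambda(s)^n$ uniform in the initial law $\mu$, where $C$, the remainder from the complementary spectral part, and $V$ are all controlled explicitly in terms of $\delta_0$ and $\lVert\bar\varphi\rVert\le 2\lVert\varphi\rVert$.

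Combining these steps with Markov's inequality gives, for $0\le s\le s_{\max}$,
\[ \pr_\mu\big[\hat\mu_n(\varphi)-\mu_0(\varphi)\ge a\big]\le C\exp\!\Big(n\big(\tfrac12 V s^2 - sa\big)\Big). \]
Optimizing in $s$ produces exactly the two cases. When the unconstrained optimizer $s^\star=a/V$ stays below $s_{\max}$ — corresponding to the small-deviation regime $a/\lVert\varphi\rVert\le\delta_0/3$ — one recovers the Gaussian-type bound $\exp(-na^2/(2V))$, matching the first line. When $s^\star>s_{\max}$ the optimum is attained at the boundary $s=s_{\max}$, yielding the exponent $n\,s_{\max}\big(a-\tfrac12 V s_{\max}\big)$, linear in $a$ with a shift proportional to $\delta_0$: this is the origin of the $-0.254\delta_0$ term and of the second line. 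A symmetric argument for the lower tail and a union bound give the two-sided statement, and carefully tracking all constants through the previous steps produces the explicit numbers, including the prefactors $2.488$, $2.624$ and the lower bound on $n$ ensuring that $C$ can be absorbed.

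The delicate part is the second step. Qualitative perturbation theory for a simple isolated eigenvalue is classical, but making it effective — bounding $\lambda(s)$, the prefactor $C$, the threshold $s_{\max}$ and the complementary spectral radius by \emph{explicit} functions of $\delta_0$ and $\lVert\bar\varphi\rVert$, using nothing beyond the Banach algebra structure and the single hyperplane contraction — is where all the numerical constants are forged, and where one must balance a clean quadratic bound against a sharp transition point between the two regimes.
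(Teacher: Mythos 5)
First, a point of reference: this statement is not proved in the present note at all. The paper explicitly imports it from \cite{K:concentration} (``In \cite{K:concentration} the following two results where proved\dots''), so there is no in-document proof to compare against; the relevant argument lives in the cited article, which uses precisely the Nagaev--Guivarc'h/Chernoff strategy you outline (twisted operator $\op{L}_s f=\op{L}_0(e^{s\bar\varphi}f)$, effective perturbation of the spectral gap, optimization in $s$ with a boundary threshold producing the two regimes). Your identity $\esp_\mu[e^{sS_n}]=\mu(\op{L}_s^n\one)$ and the role of the Banach algebra hypothesis in controlling $\lVert e^{s\bar\varphi}f\rVert$ are the right starting point, and the two-regime structure (interior optimizer versus boundary $s_{\max}$) is indeed where the case split $a/\lVert\varphi\rVert\lessgtr\delta_0/3$ comes from.

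That said, as a proof of \emph{this} statement your proposal has a genuine gap, and it is the one you yourself flag: the entire quantitative content of the theorem --- the explicit $V$, the prefactor $C$, the threshold $s_{\max}$, the contraction rate of the perturbed complementary part, and hence the numbers $13.44$, $8.324$, $0.98$, $0.254$, $2.488$, $2.624$ and the admissibility condition $n\ge 1+\log 100/(-\log(1-\delta_0/13))$ --- is asserted to be obtainable ``explicitly'' but never derived. These constants are not decorative: the appearance of $\delta_0/13$ in the condition on $n$, and the fact that a naive computation of $\tfrac12 V s_{\max}$ from the first-line variance proxy and the second-line slope does not reproduce $0.254\delta_0$ exactly, show that the actual derivation carries additional slack terms and intermediate estimates that your sketch does not determine. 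In particular, the step ``$\op{L}_s$ still carries a simple dominant eigenvalue $\lambda(s)$ with $\log\lambda(s)\le\tfrac12 Vs^2$ and $\lVert\op{L}_s^n\one\rVert\le C\lambda(s)^n$'' is exactly the theorem in disguise: qualitative perturbation theory gives existence of $\lambda(s)$ for small $s$, but producing a \emph{uniform, explicit} $V$, $C$ and $s_{\max}$ from nothing more than the single hyperplane contraction of Assumption \ref{hypo:L} requires a concrete mechanism (e.g.\ an explicit bound on the perturbed spectral projection or an iterated Doeblin--Fortet-type inequality for $\op{L}_s$), which is absent. As written, the proposal establishes only that \emph{some} bound of the stated shape holds for \emph{some} constants, not the statement as given.
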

(We will often use the strenghtened hypothesis $n\ge 60/\delta_0$ for simplicity.)

\begin{theomain}\label{theo:main-second}
Assuming assumptions \ref{hypo:X} and \ref{hypo:L}, for all $n\ge  \frac{60}{\delta_0}$, all $U \ge \sigma^2(\varphi)$ and all
$a \le \frac{U}{\lVert\varphi\rVert} \log\Big(1+\frac{\delta_0^2}{12+13\delta_0}\Big)$ it holds:
\[\pr_\mu\big[\lvert \hat\mu_n(\varphi)-\mu_0(\varphi)\rvert\ge a\big]
  \le 2.637 \exp\left(-n\cdot\Big(\frac{a^2}{2U} - 10(1+\delta_0^{-1})^2 \frac{\lVert\varphi\rVert^3a^3}{U^3}\Big)\right).\]
\end{theomain}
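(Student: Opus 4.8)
The plan is to run a one-sided Chernoff estimate and to read off the Bernstein exponent from a second-order expansion of the leading eigenvalue of the tilted transfer operator; the factor $2.637$ will come from summing the two tails. Set $\psi \eqdef \varphi - \mu_0(\varphi)\one$, so that $\{\hat\mu_n(\varphi) - \mu_0(\varphi) \geq a\} = \{\sum_{k=1}^n \psi(X_k) \geq na\}$ and, for every $t > 0$,
\[\pr_\mu\big[\hat\mu_n(\varphi) - \mu_0(\varphi) \geq a\big] \leq e^{-nta}\,\esp_\mu\big[e^{t\sum_{k=1}^n \psi(X_k)}\big].\]
The lower tail is handled identically after replacing $\varphi$ by $-\varphi$. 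Everything thus reduces to an effective upper bound on the exponential moment on the right.

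The first step is to express that moment through the tilted operator $\op{L}_t f \eqdef \op{L}_0(e^{t\psi} f)$. By the Markov property and the tower rule, $\esp_\mu[e^{t\sum_{k=1}^n \psi(X_k)}] = \mu(\op{L}_t^{\,n}\one)$. Assumption \ref{hypo:X} makes $\fX$ a Banach algebra dominating the uniform norm, whence $\lVert e^{t\psi}\rVert \leq e^{t\lVert\psi\rVert} \leq e^{2t\lVert\varphi\rVert}$ and $t \mapsto \op{L}_t$ is an analytic family of bounded operators perturbing $\op{L}_0$. The spectral gap of Assumption \ref{hypo:L} makes $1$ a simple isolated eigenvalue of $\op{L}_0$ with spectral projection $P_0 f = \mu_0(f)\one$, and the contraction by $1-\delta_0$ off $\mathbb{R}\one$ bounds the resolvent, $\lVert(\op{L}_0-z)^{-1}\rVert = O(\delta_0^{-1})$, on the circle $\lvert z-1\rvert = \delta_0/2$. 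Analytic perturbation theory then furnishes, for $\lvert t\rvert \leq t_0$ with $t_0 \sim \delta_0/\lVert\varphi\rVert$, a simple leading eigenvalue $\lambda(t)$ and projection $P_t$, both analytic in $t$, with
\[\op{L}_t^{\,n}\one = \lambda(t)^n P_t\one + R_n, \qquad \lVert R_n\rVert \leq C\,(1-\delta_0/2)^n,\]
all constants being explicit in $\delta_0$ via the resolvent bound.

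The core of the argument is the expansion of $\log\lambda(t)$. Differentiating the eigenvalue relation at $t=0$ and using $\mu_0(\psi)=0$ gives $\lambda'(0)=0$ and $\lambda''(0)=\sigma^2(\varphi)$, so that $\log\lambda(t) = \tfrac12\sigma^2(\varphi)\,t^2 + E(t)$ with $E$ collecting the cubic and higher order terms. Here I would make $E$ explicit: the Banach-algebra Taylor remainder of $e^{t\psi}$ together with the $O(\delta_0^{-1})$ resolvent estimate bounds the third-order coefficient, yielding $\lvert E(t)\rvert \leq c\,(1+\delta_0^{-1})^2\lVert\varphi\rVert^3\,\lvert t\rvert^3$ on $\lvert t\rvert \leq t_0$, the exponent $2$ on $(1+\delta_0^{-1})$ reflecting the two resolvent corrections needed to reach third order. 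Feeding the eigenvalue splitting and $U \geq \sigma^2(\varphi)$ into the exponential moment gives, for $0 < t \leq t_0$ and $n \geq 60/\delta_0$,
\[\tfrac1n\log\esp_\mu\big[e^{t\sum_{k=1}^n \psi(X_k)}\big] \leq \tfrac{U}{2}\,t^2 + c\,(1+\delta_0^{-1})^2\lVert\varphi\rVert^3\,t^3 + \tfrac1n\log C_0,\]
where $C_0$ bounds $\mu(P_t\one)$ and the remainder; the condition $n \geq 60/\delta_0$ is what keeps $C_0$ close to $1$, so that after doubling for the two tails the prefactor becomes $2.637$.

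It then remains to optimize. Inserting this estimate into the Chernoff bound turns the exponent into $n\big(ta - \tfrac{U}{2}t^2 - c(1+\delta_0^{-1})^2\lVert\varphi\rVert^3 t^3\big)$, and the Gaussian-optimal choice $t = a/U$ — which need only be admissible, not exactly optimal, since it lower-bounds the supremum over $t$ — produces precisely the announced exponent $\tfrac{a^2}{2U} - 10(1+\delta_0^{-1})^2\tfrac{\lVert\varphi\rVert^3 a^3}{U^3}$ once $c$ is tracked to the value $10$. The hypothesis $a \leq \tfrac{U}{\lVert\varphi\rVert}\log\!\big(1+\tfrac{\delta_0^2}{12+13\delta_0}\big)$ is exactly the requirement that $t = a/U$ keep $\op{L}_t$ within the disk of validity of the perturbation (equivalently that $\lVert e^{t\psi} - \one\rVert \leq e^{t\lVert\psi\rVert}-1$ stay below the gap-controlled threshold $\tfrac{\delta_0^2}{12+13\delta_0}$). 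The main obstacle is this \emph{effective} perturbation theory: converting the qualitative hyperplane contraction of Assumption \ref{hypo:L} into explicit bounds on $\lambda(t)$, $P_t\one$, $R_n$ and $t_0$ with constants sharp enough to yield the stated numerical values, all without any self-adjointness and relying solely on the resolvent and the Banach-algebra norm. This same exponential-moment estimate is the engine behind Theorem \ref{theo:main-conc}, the two statements differing only in how the bound is optimized.
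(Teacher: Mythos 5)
First, a point of context: this note does not prove Theorem~\ref{theo:main-second} at all --- it is quoted verbatim from \cite{K:concentration} (``In \cite{K:concentration} the following two results where proved''), and the present paper only supplies the inputs $\delta_0$ and $\lVert\varphi\rVert$ for various examples. So there is no in-paper proof to compare against; your proposal has to be measured against the strategy of the cited reference. On that score your outline is the right one: the proof there is indeed a Nagaev--Guivarc'h argument via the tilted operator $\op{L}_t f=\op{L}_0(e^{t\psi}f)$, a Chernoff bound, an effective perturbation expansion of the leading eigenvalue $\lambda(t)$ with $\lambda'(0)=0$ and $(\log\lambda)''(0)=\sigma^2(\varphi)$, and the admissible (not optimal) choice $t=a/U$; your reading of the hypothesis on $a$ as the condition $e^{t\lVert\varphi\rVert}-1\le \delta_0^2/(12+13\delta_0)$ keeping $t$ inside the perturbation radius is also correct, and this is visibly the same engine as Theorem~\ref{theo:main-conc}.

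The genuine gap is that everything quantitative --- which is the entire content of the statement --- is asserted rather than derived. You write that the third-order remainder is bounded by $c\,(1+\delta_0^{-1})^2\lVert\varphi\rVert^3\lvert t\rvert^3$ ``once $c$ is tracked to the value $10$'', that the resolvent bound is $O(\delta_0^{-1})$ on the circle $\lvert z-1\rvert=\delta_0/2$ ``all constants being explicit'', and that $n\ge 60/\delta_0$ ``keeps $C_0$ close to $1$'' so the prefactor becomes $2.637$. None of these is established, and none is routine: Assumption~\ref{hypo:L} gives a contraction on a closed hyperplane $G_0$, not on the range of $\Id-P_0$, so even producing an explicit resolvent bound (and explicit bounds on $\lVert P_t-P_0\rVert$, $\mu(P_t\one)$ and $\lVert R_n\rVert$) requires a dedicated effective perturbation theorem --- this is precisely the technical heart of \cite{K:concentration}, and you correctly identify it as ``the main obstacle'' without overcoming it. In particular there is no argument showing that the perturbation radius comes out as exactly $\delta_0^2/(12+13\delta_0)$, that the cubic coefficient is $\le 10(1+\delta_0^{-1})^2$, or that the two-sided prefactor is $2.637$ rather than some other constant. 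As it stands the proposal is a correct high-level roadmap, not a proof of the stated inequality.
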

Above, we use the notation $\sigma^2(\varphi)=\mu_0(\varphi^2)-(\mu_0\varphi)^2+2\sum_{k\ge 1} \mu_0(\varphi \op{L}_0^k \bar\varphi)$
where $\bar\varphi = \varphi-\mu_0(\varphi)$. This ``dynamical variance'' is precisely the variance appearing in the CLT. 

While it is well known that the presence of a spectral gap ensures classical limit theorems, these results turn explicit contraction estimates into explicit non-asymptotic results. The main goal of this note is to compute lower bounds on $\delta_0$ for several pairs of Markov chains and functional spaces. We shall apply the above result for illustration, and compare to previous results when available.

\section{Preliminary lemma}

In each example below we will use the following lemma which, in the spirit of Doeblin-Fortet and Lasota-Yorke inequalities, enables to turn an exponential contraction in the  ``regularity part'' of a functional norm into a spectral gap.
\begin{lemm}\label{lemm:gap}
Consider a normed space $\fspace{X}$ of (Borel measurable, bounded) functions $\Omega\to\mathbb{R}$, with norm $\lVert \cdot\rVert = \lVert \cdot \rVert_\infty+V(\cdot)$ where $V$ is a semi-norm (usually quantifying some regularity of the argument, such as $\Lip$ or $\BV$).

Assume that for some constant $C>0$, for all probability $\mu$ on $\Omega$ and for all $f\in\fspace{X}$ such that $\mu(f)=0$, 
$\lVert f\rVert_\infty \le C V(f)$.

Let $\op{L}_0\in\fspace{B}(\fspace{X})$ and assume that for some $\theta\in(0,1)$ and all $f\in\fX$: 
\[\rVert \op{L}_0f\rVert_\infty \le \lVert f\rVert_\infty \quad\mbox{and}\quad V(\op{L}_0 f)\le \theta V(f)\]
and having eigenvalue $1$ with an eigenprobability $\mu_0$, i.e. $\op{L}_0^*\mu_0=\mu_0$.

Then $\op{L}_0$ is contracting with gap at least 
\[\delta_0 = \frac{1-\theta}{1+C\theta}.\]
\end{lemm}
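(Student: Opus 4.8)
The plan is to take for the hyperplane the kernel $G_0 = \{f\in\fX : \mu_0(f)=0\}$ and establish the contraction directly on it. First I would check that $G_0$ is a closed hyperplane: since $\lvert \mu_0(f)\rvert \le \lVert f\rVert_\infty \le \lVert f\rVert$, the linear functional $f\mapsto \mu_0(f)$ is continuous, and it is nonzero because $\mu_0(\one)=1$; hence its kernel $G_0$ is a closed hyperplane. Next, the relation $\op{L}_0^*\mu_0=\mu_0$ gives $\mu_0(\op{L}_0 f)=\mu_0(f)$, so that $G_0$ is $\op{L}_0$-invariant: if $\mu_0(f)=0$ then $\mu_0(\op{L}_0 f)=0$ as well.

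Fix $f\in G_0$. The hypotheses on $\op{L}_0$ control the regularity part immediately, namely $V(\op{L}_0 f)\le \theta V(f)$. For the uniform part I would exploit two competing estimates. On one hand $\lVert \op{L}_0 f\rVert_\infty \le \lVert f\rVert_\infty$ by the assumed contraction in $\lVert\cdot\rVert_\infty$; on the other hand, since $\op{L}_0 f\in G_0$, the hypothesis $\lVert g\rVert_\infty \le C V(g)$ applied with the probability $\mu_0$ to $g=\op{L}_0 f$ gives $\lVert \op{L}_0 f\rVert_\infty \le C V(\op{L}_0 f)\le C\theta V(f)$. The crux is to interpolate between these two bounds with the correct weight: writing $\lVert \op{L}_0 f\rVert_\infty \le \lambda \lVert f\rVert_\infty + (1-\lambda)C\theta V(f)$ for a free parameter $\lambda\in[0,1]$ and adding $V(\op{L}_0 f)\le \theta V(f)$ yields
\[ \lVert \op{L}_0 f\rVert \le \lambda \lVert f\rVert_\infty + \big((1-\lambda)C\theta + \theta\big) V(f). \]

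Choosing $\lambda = 1-\delta_0 = \frac{\theta(1+C)}{1+C\theta}$ makes both coefficients equal, since a short computation shows $(1-\lambda)C\theta+\theta = \frac{\theta(1+C)}{1+C\theta} = 1-\delta_0$ as well. As $\theta\in(0,1)$ and $C>0$ force $\delta_0 = \frac{1-\theta}{1+C\theta}\in(0,1)$, this $\lambda$ is admissible, and one concludes $\lVert \op{L}_0 f\rVert \le (1-\delta_0)\big(\lVert f\rVert_\infty + V(f)\big) = (1-\delta_0)\lVert f\rVert$ for every $f\in G_0$, which is exactly the claimed gap. The only genuine obstacle is recognizing that one must balance the trivial contraction in $\lVert\cdot\rVert_\infty$ against the regularity estimate: bounding $\lVert \op{L}_0 f\rVert_\infty$ by $C\theta V(f)$ alone would only give $\lVert \op{L}_0 f\rVert \le \theta(1+C)V(f)$, and since $V(f)$ may be as large as $\lVert f\rVert$ this loses the factor $1/(1+C\theta)$ and fails to produce any contraction once $\theta(1+C)\ge 1$.
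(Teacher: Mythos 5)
Your proof is correct and follows essentially the same route as the paper: both rest on the two competing bounds $\lVert \op{L}_0 f\rVert_\infty \le \lVert f\rVert_\infty$ and $\lVert \op{L}_0 f\rVert_\infty \le C\theta V(f)$ (the latter using that $\op{L}_0 f$ stays in $\ker\mu_0$), combined with $V(\op{L}_0 f)\le\theta V(f)$. The only difference is bookkeeping in the final optimization — you take a convex combination of the two bounds with the weight $\lambda$ chosen to equalize coefficients, while the paper takes the minimum of the two bounds and maximizes over the ratio $t=\lVert f\rVert_\infty/\lVert f\rVert$; these are dual views of the same balancing computation and yield the same gap $\frac{1-\theta}{1+C\theta}$.
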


The condition $\lVert f\rVert_\infty \le C V(f)$ is often valid in practice (assuming $\Omega$ has finite diameter for spaces such as $\Lip(\Omega)$): the condition that $\mu(f)=0$ implies that $f$ vanishes (if functions in $\fX$ are continuous) or at least takes both non-positive and non-negative values, and $V(f)$ usually bounds the variations of $f$, implying a bound on its uniform norm.

\begin{proof}
Let $f\in \ker\mu_0$; then $\lVert \op{L}_0f\rVert_\infty\le \lVert f\rVert_\infty$ and $\op{L}_0f\in\ker\mu_0$, so that $\lVert \op{L}_0 f\rVert_\infty\le CV(\op{L}_0f)\le C\theta V(f)$.

Denote by $t\in[0,1]$ the number such that $\lVert f\rVert_\infty = t\lVert f\rVert$ (and therefore $V(f)=(1-t)\lVert f\rVert$).
The above two controls on $\lVert \op{L}_0(f)\rVert_\infty$ can then be written as $\lVert \op{L}_0(f)\rVert_\infty \le \min\big(t, C\theta(1-t)\big)\lVert f\rVert$ and using $V(\op{L}_0f)\le \theta V(f)$ again we get
\begin{align*}
\lVert \op{L}_0(f)\rVert &\le \min\big(t+\theta(1-t), (C+1)\theta(1-t)\big)\lVert f\rVert \\
\lVert (\op{L}_0)_{|\ker\mu_0}\rVert &\le \max_{t\in[0,1]} \min\big(t+\theta(1-t), (C+1)\theta(1-t)\big).
\end{align*}
The maximum is reached when $t+\theta(1-t) = (C+1)\theta(1-t)$, i.e. when $t=C\theta/(1+C\theta)$, at which point the value in the minimum is 
$(C+1)\theta/(C\theta+1) \in(0,1)$. We get contraction with gap $1- (C+1)\theta/(C\theta+1)$, as claimed.
\end{proof}

\section{Chains with Doeblin's minorization}\label{sec:Doeblin}

We start with a warm-up in the simplest example of a Banach Algebra of functions, the space of measurable bounded functions $L^\infty(\Omega)$.\footnote{We do not have a single reference measure here, which is why we consider genuinely bounded functions rather than essentially bounded functions.} To fit our framework, we will need to endow $L^\infty(\Omega)$ with the norm $\lVert\cdot\rVert_S = \lVert f\rVert_\infty + S(f)$ where
\[S(f) :=  \sup_{x,y\in\Omega} \lvert f(x)-f(y)\rvert=\sup f-\inf f\]
measures how ``spread out'' $f$ is, which we need to manage separately from the magnitude of $f$.
Of course, this norm is equivalent to the uniform norm, and it is easily checked what we still get a Banach Algebra.

Observe that convergence of measures in duality to $L^\infty(\Omega)$ is convergence in total variation, and the most usual normalization is
\[d_{\mathrm{TV}}(\mu,\nu) := \sup_{S(f)=1} \big\lvert \mu(f) - \nu(f)\big\rvert.\]
For a transition kernel $\mchain{M}$, having an averaging operator $\op{L}_0$ with a spectral gap is a very strong condition, called \emph{uniform ergodicity}.

Glynn and Ormoneit \cite{Glynn2002} and Kontoyiannis, Lastras-Montaño and Meyn \cite{Kontoyiannis-LMM} gave explicit concentration results for such chains, using the characterization of uniform ergodicity by the \emph{Doeblin minorization condition}: there exist an integer $\ell\ge 1$, a positive number $\beta$ and a probability measure $\omega$ on $\Omega$ such that for all $x\in\Omega$ and all Borel set $B\subset \Omega$:
\begin{equation}
m_x^\ell(B) \ge \beta\omega(B)
\label{eq:Doeblin}
\end{equation}
where $m_x^\ell$ is the law of $X_\ell$ conditionally to $X_0=x$.

We shall look at the case $\ell=1$, which fits better in our context. For arbitrary value of $\ell$, one can in practice apply the result to each extracted chain $(X_{k_0+k\ell})_{k\ge 0}$.
\begin{prop}\label{lemm:Doeblin}
If $\mchain{M}$ satisfies Doeblin's minorization condition \eqref{eq:Doeblin} with $\ell=1$, then its averaging operator $\op{L}_0$ is contracting on $L^\infty(\Omega)$ with gap $\beta/(2-\beta)$.
\end{prop}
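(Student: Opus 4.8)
The plan is to apply Lemma~\ref{lemm:gap} with the semi-norm $V = S$ on the space $L^\infty(\Omega)$, so the main task reduces to verifying its two hypotheses and computing the resulting gap. First I would establish the mean-oscillation bound: if $\mu(f)=0$ for a probability measure $\mu$, then $f$ takes both nonpositive and nonnegative values (as its integral against $\mu$ vanishes), hence $\inf f \le 0 \le \sup f$, which gives $\lVert f\rVert_\infty = \max(\sup f, -\inf f) \le \sup f - \inf f = S(f)$. This yields the constant $C=1$. The key point is that $S$ is translation-invariant (adding a constant does not change it), so this oscillation estimate is exactly what makes the abstract lemma applicable here.

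Next I would verify the two contraction estimates for $\op{L}_0$ required by the lemma, with $\theta$ to be identified. The bound $\lVert \op{L}_0 f\rVert_\infty \le \lVert f\rVert_\infty$ is immediate since each $m_x$ is a probability measure, so $\op{L}_0$ averages values of $f$. The substantive computation is the contraction of the oscillation: I need to show $S(\op{L}_0 f) \le \theta\, S(f)$ for an explicit $\theta$. Here is where Doeblin's condition \eqref{eq:Doeblin} with $\ell=1$ enters. For two points $x,y\in\Omega$, I would estimate $\op{L}_0 f(x) - \op{L}_0 f(y) = \int f\,\dd m_x - \int f\,\dd m_y$ by writing each transition measure as $m_x = \beta\omega + (1-\beta)\tilde m_x$, where $\tilde m_x = (m_x - \beta\omega)/(1-\beta)$ is a genuine probability measure by the minorization. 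The common part $\beta\omega$ cancels in the difference, leaving $(1-\beta)\big(\int f\,\dd\tilde m_x - \int f\,\dd\tilde m_y\big)$, which is bounded by $(1-\beta)\, S(f)$. This gives $S(\op{L}_0 f) \le (1-\beta)\, S(f)$, i.e. $\theta = 1-\beta$.

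Finally I would assemble the constants. With $C=1$ and $\theta = 1-\beta$, the lemma yields a gap of at least
\[
\delta_0 = \frac{1-\theta}{1+C\theta} = \frac{1-(1-\beta)}{1+(1-\beta)} = \frac{\beta}{2-\beta},
\]
which is exactly the claimed value. The existence of an eigenprobability $\mu_0$ with $\op{L}_0^*\mu_0 = \mu_0$ (the stationary measure) is part of the standing setup for a uniformly ergodic chain, so the remaining hypothesis of Lemma~\ref{lemm:gap} is satisfied.

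I expect the oscillation contraction to be the only nonroutine step, and within it the decomposition $m_x = \beta\omega + (1-\beta)\tilde m_x$ is the crucial idea: the whole point of Doeblin's condition is precisely that it forces a fixed proportion $\beta$ of every transition measure to be a \emph{common} part independent of the starting point, and it is this common part—cancelling in the difference $\op{L}_0 f(x) - \op{L}_0 f(y)$—that produces the contraction factor $1-\beta$. No delicate estimates are needed beyond checking that $\tilde m_x$ is a probability measure, which follows from $m_x^1(B) \ge \beta\omega(B)$ and total mass normalization.
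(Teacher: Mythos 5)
Your proof is correct and follows essentially the same route as the paper: the same decomposition $m_x = \beta\omega + r_x$ (you merely normalize the remainder into a probability measure $\tilde m_x$, which changes nothing), the same contraction $S(\op{L}_0 f)\le(1-\beta)S(f)$, and the same application of Lemma~\ref{lemm:gap} with $C=1$, $\theta=1-\beta$. Your explicit verification that $\mu(f)=0$ forces $\lVert f\rVert_\infty\le S(f)$ is a welcome detail the paper leaves to its remark following the lemma.
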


\begin{proof}
This is simply the classical maximal coupling method in a functional guise. For each $x\in\Omega$  decompose $m_x$ into $\beta\omega$ and $r_x := m_x-\beta\omega$ (which is a positive measure of mass $1-\beta$). Recall that we denote by $\mu_0$ the stationary measure of $\mchain{M}$. For all $f\in L^\infty(\Omega)$ we have:
\begin{align*}
\op{L}_0 f(x) &= \beta\omega(f)+ r_x(f) \\
\op{L}_0 f(x) - \op{L}_0 f(y) &= \int (r_x(f)-r_y(f)) \dd\mu_0(y) \\
\big\lvert \op{L}_0 f(x) - \op{L}_0 f(y) \big\rvert &\le \int (1-\beta)S(f) \dd\mu_0(y) \\
S(\op{L}_0 f)   &\le (1-\beta)S(f).
\end{align*}
We can thus apply Lemma \ref{lemm:gap} with $C=1$ and $\theta=1-\beta$, obtaining a spectral gap of size $\beta/(2-\beta)$.
\end{proof}

\begin{coro}\label{theo:Doeblin}
If $\mchain{M}$ satisfies Doeblin's minorization condition \eqref{eq:Doeblin} with $\ell=1$ and $\varphi : \Omega\to[-1,1]$, for all $n\ge 120/\beta$ and all $a\le \beta/2$ it holds
\begin{equation*}
\pr_\mu\Big[\lvert\hat\mu_n(\varphi)-\mu_0(\varphi)\rvert\ge a\Big]    \le
 2.5 \exp\big(- n a^2 \cdot \frac{\beta}{150 + 47\beta}\big).
\end{equation*}
\end{coro}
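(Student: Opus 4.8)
The plan is to combine Proposition \ref{lemm:Doeblin} with Theorem \ref{theo:main-conc}, so the real work is purely arithmetic: verifying that the hypotheses of the concentration theorem hold and then bounding the resulting constants. First I would set $\delta_0 = \beta/(2-\beta)$, which is the spectral gap furnished by Proposition \ref{lemm:Doeblin} on $L^\infty(\Omega)$ equipped with the norm $\lVert\cdot\rVert_S$. Since $\varphi$ takes values in $[-1,1]$, one has $\lVert\varphi\rVert_\infty\le 1$ and $S(\varphi)=\sup\varphi-\inf\varphi\le 2$, so $\lVert\varphi\rVert_S\le 3$; this controls the $\lVert\varphi\rVert^2$ appearing in the denominator of the exponent. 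I would also need to check Assumptions \ref{hypo:X} and \ref{hypo:L} are in force for $(L^\infty(\Omega),\lVert\cdot\rVert_S)$, but these are exactly the properties already noted in the text (Banach algebra, domination of the uniform norm, $\lVert\one\rVert_S=1$) together with the gap just established.

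Next I would translate the hypotheses on $n$ and $a$. The condition $n\ge 120/\beta$ should imply the theorem's requirement $n\ge 60/\delta_0$: since $\delta_0=\beta/(2-\beta)\ge\beta/2$, we have $60/\delta_0\le 120/\beta$, so $n\ge 120/\beta$ suffices. For the range of $a$, I would confirm that $a\le\beta/2$ places us in the first (sub-Gaussian) branch of Theorem \ref{theo:main-conc}, i.e. that $a/\lVert\varphi\rVert\le\delta_0/3$. Here I must be a little careful: the theorem's branch condition is on $a/\lVert\varphi\rVert$, and with the \emph{a priori} bound $\lVert\varphi\rVert\le 3$ one gets $a/\lVert\varphi\rVert\ge a/3$, which points the wrong way. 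The cleaner route is to observe that in the first branch the exponent is \emph{increasing} as $\lVert\varphi\rVert$ decreases, so it is safe to \emph{upper bound} $\lVert\varphi\rVert$ by $3$ in the denominator; but to stay inside the first branch I would verify $a/\lVert\varphi\rVert\le\delta_0/3$ using a \emph{lower} bound on $\lVert\varphi\rVert$ or, more robustly, argue directly that the quoted $a\le\beta/2$ keeps us in the sub-Gaussian regime for all admissible $\lVert\varphi\rVert$ and, if necessary, note that the second branch only yields a better (smaller) bound once the sub-Gaussian estimate has been established. This interplay between the two branches is the step I expect to require the most care.

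With the branch fixed, the exponent from Theorem \ref{theo:main-conc} reads
\[
-n\,\frac{\delta_0}{13.44\,\delta_0+8.324}\,\frac{a^2}{\lVert\varphi\rVert^2}.
\]
Substituting $\delta_0=\beta/(2-\beta)$ and $\lVert\varphi\rVert\le 3$, I would simplify
\[
\frac{\delta_0}{13.44\,\delta_0+8.324}\cdot\frac1{9}
=\frac{\beta}{9\,(13.44\,\beta+8.324\,(2-\beta))}
=\frac{\beta}{9\,(5.116\,\beta+16.648)},
\]
and then bound the denominator from above by a clean expression of the form $150+47\beta$. A quick check: $9\cdot 16.648=149.8\le 150$ and $9\cdot 5.116=46.0\le 47$, so indeed $9(5.116\beta+16.648)\le 150+47\beta$ for all $\beta\in(0,1]$, giving the claimed coefficient $\beta/(150+47\beta)$. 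Finally the prefactor $2.488$ from the first branch of Theorem \ref{theo:main-conc} is bounded above by the stated $2.5$, completing the estimate. The whole argument is thus a specialization followed by explicit numerical majorizations; the only subtlety is the branch bookkeeping in the second paragraph, where one must make sure the chosen bound on $\lVert\varphi\rVert$ is used in the direction that simultaneously keeps us in the correct regime and weakens the exponent.
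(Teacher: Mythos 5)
Your route is exactly the paper's: specialize Proposition \ref{lemm:Doeblin} to get $\delta_0=\beta/(2-\beta)\ge\beta/2$, bound $\lVert\varphi\rVert_S\le\lVert\varphi\rVert_\infty+S(\varphi)\le 3$, note $n\ge 120/\beta\ge 60/\delta_0$, and feed this into Theorem \ref{theo:main-conc}. Your arithmetic is correct and in fact reconstructs the stated constants: with the \emph{exact} value $\delta_0=\beta/(2-\beta)$ one gets $\frac{\delta_0}{13.44\delta_0+8.324}=\frac{\beta}{5.116\beta+16.648}$, hence after dividing by $9$ a rate $\frac{\beta}{149.832+46.044\beta}\ge\frac{\beta}{150+47\beta}$, and $2.488\le 2.5$. (Using only the lower bound $\delta_0\ge\beta/2$ in the rate would give roughly $121\beta$ instead of $47\beta$ in the denominator, so your choice to substitute the exact gap is the right one.)

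The step you defer, the branch bookkeeping, is a genuine issue, and neither of your proposed resolutions works as stated. It is false that $a\le\beta/2$ forces $a/\lVert\varphi\rVert_S\le\delta_0/3$ for all admissible $\varphi$: for an indicator $\varphi=\one_B$ one has $\lVert\varphi\rVert_S=2$, and at $a=\beta/2$ one gets $a/\lVert\varphi\rVert_S=\beta/4>\delta_0/3=\beta/(3(2-\beta))$ whenever $\beta<2/3$, so the second branch of Theorem \ref{theo:main-conc} is the one that literally applies. It is also false that the second branch "only yields a better (smaller) bound": at the crossover $a/\lVert\varphi\rVert=\delta_0/3$ its exponent is $\frac{0.98\delta_0^2}{12+13\delta_0}\bigl(\tfrac{1}{3}-0.254\bigr)\delta_0\approx\frac{0.078\,\delta_0^3}{12+13\delta_0}$, roughly half of the first-branch exponent $\frac{\delta_0^3}{9(13.44\delta_0+8.324)}$, and it decays only linearly in $a$ thereafter, so the second-branch bound is \emph{larger} exactly where it matters. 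To be fair, the paper's own one-line proof silently elides the same point; the intended justification is that Theorem \ref{theo:main-conc} remains valid when $\lVert\varphi\rVert$ is replaced by any upper bound (here $3$) \emph{simultaneously} in the branch condition and in the estimate --- a fact that follows from its proof in the cited reference but not from its statement --- after which $a\le\beta/2\le\delta_0$ does place you in the first branch. Working only from the statement of Theorem \ref{theo:main-conc}, you would need either to add that remark or to restrict the corollary to $a/\lVert\varphi\rVert_S\le\delta_0/3$.
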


\begin{proof}
We have here $\lVert \varphi\rVert_S\le 3$ and, by Lemma \ref{lemm:Doeblin}, $\delta_0\ge \beta/(2-\beta)\ge \beta/2$. It then suffices to apply Theorem \ref{theo:main-conc} and round constants up.
\end{proof}

The exponent is proportional to $\beta$, which is the correct rate and improves on \cite{Glynn2002} and \cite{Kontoyiannis-LMM} which get a $\beta^2$; but
Paulin obtains better constants in this case \cite{Paulin2015} (Corollary 2.10), and we do not study this example further.

\section{Discrete hypercube}\label{sec:cube}

Let us consider the same toy example as Joulin and Ollivier \cite{JO}, the lazy random walk (aka Gibbs sampler, aka Glauber dynamics) on the discrete hypercube $\{0,1\}^N$: the transition kernel $\mchain{M}$ chooses randomly uniformly a slot $i\in\{1,\dots, N\}$ and replaces it with the result of a fair coin toss, i.e. 
\[m_x = \frac12 \delta_x + \sum_{y\sim x} \frac{1}{2N} \delta_y.\]

We consider two kind of observables: the ``polarization''$\rho:\{0,1\}^N \to \mathbb{R}$ giving the proportion of $1$'s in its argument, and the characteristic function $\one_S$ of a subset $S\subset \{0,1\}^N$. In this second example, we will in particular consider the simple case $S=[0]:=\{(0,x_2,\dots,x_N) \colon x_i\in\{0,1\}\}$.

%
%

\subsection{Spectral gap estimates}

The discrete hypercube $\{0,1\}^N$ is endowed with the Hamming metric: if $x=(x_1,\dots,x_N)$ and $y=(y_1,\dots,y_N)$, then $d(x,y)$ is the number of indexes $i$ such that $x_i\neq y_i$. Two elements at distance $1$ are said to be adjacent, denoted by $x\sim y$. 

We denote by $E$ the set of tuples $\epsilon=(\epsilon_i)_{1\le i\le N}$ such that exactly one of the $\epsilon_i$ is $1$. Identifying $\{0,1\}$ with $\mathbb{Z}/2\mathbb{Z}$, an edge thus writes $(x,x+\epsilon)$ for some $x\in\{0,1\}^N$ and some $\epsilon\in E$.

We shall consider several function spaces to showcase the flexibility of the spectral method; since the space $\{0,1\}^N$ is finite, we always consider the space of all functions $\{0,1\}^N\to\mathbb{R}$, and it is the considered norm which will matter. Let us define:
\begin{itemize}
\item $\lVert f \rVert_{L} = \lVert f \rVert_\infty + \Lip(f)$: this is the standard Lipschitz norm;
\item $\lVert f \rVert_{dL} = \lVert f \rVert_\infty + N\Lip(f)$: this is the Lipschitz norm with a weigth to the regularity part equal to the diameter;
\item $\lVert f \rVert_{W} = \lVert f \rVert_\infty + W(f)$ where 
\[W(f) = \sup_{x\in\{0,1\}^N} \sum_{\epsilon\in E} \lvert f(x+\epsilon)-f(x)\rvert;\]
this norm stays small for functions having large variations only in few directions (small ``local total variation'').
\end{itemize}

We shall use later the following non-trivial comparison with $\lVert\cdot\rVert_S$.
\begin{lemm}[Fedor Petrov \cite{Petrov}]\label{lemm:Petrov}
For all $f:\{0,1\}^N\to\mathbb{R}$ we have 
\[\max f - \min f\le W(f).\]
\end{lemm}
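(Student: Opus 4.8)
The plan is to argue by induction on $N$, after a normalization that places the extrema of $f$ at two antipodal vertices. Write $W_x(f) = \sum_{\epsilon\in E}|f(x+\epsilon)-f(x)|$ for the \emph{local budget} at $x$, so that $W(f) = \max_x W_x(f)$, and let $x^\star,y^\star$ be points where $f$ attains its maximum $M$ and minimum $m$. First I would reduce to the case $d(x^\star,y^\star)=N$: composing $f$ with a coordinate flip (an isometry of the cube, which changes neither $W$ nor $\max f-\min f$) moves $y^\star$ to $\boldsymbol 0$, after which $x^\star=\boldsymbol 1_S$ with $S$ the set of coordinates in which the two extrema differ. Restricting $f$ to the subcube $\{x:x_j=0\text{ for }j\notin S\}$ leaves $\max f-\min f$ unchanged (both extrema lie in it) while it can only decrease $W$, since for a vertex of the subcube the sum defining its budget runs over fewer directions and the supremum is taken over fewer points. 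Hence, if the extrema are not antipodal we land in a strictly smaller cube and conclude by induction; the only genuinely new case at dimension $N$ is when $x^\star$ and $y^\star$ are antipodal.

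For the antipodal case I would split the cube along one coordinate into its two opposite facets, each a copy of $\{0,1\}^{N-1}$, carrying the restrictions $f_0,f_1$ of $f$ and joined by the vertical jumps $j(x')=|f_1(x')-f_0(x')|$. The budget of $f$ at a facet vertex then decomposes as $W_{x'}(f_0)+j(x')$ (resp.\ $W_{x'}(f_1)+j(x')$), so that $W(f)$ dominates every quantity $W_{x'}(f_i)+j(x')$. Introducing $g=\max(f_0,f_1)$ and $h=\min(f_0,f_1)$ one has $g\ge h$, $g-h=j$, $\max g=M$ and $\min h=m$, whence $\max f-\min f=\max g-\min h$ together with the telescoping bounds
\[ \max g-\min h\le \operatorname{osc}(g)+j(\arg\min h),\qquad \max g-\min h\le \operatorname{osc}(h)+j(\arg\max g), \]
in which $\operatorname{osc}(g),\operatorname{osc}(h)$ are controlled by $W(g),W(h)$ through the induction hypothesis. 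The technical engine for relating these back to $f_0,f_1$ is the pointwise sorting inequality $|\max(a,b)-\max(c,d)|+|\min(a,b)-\min(c,d)|\le|a-c|+|b-d|$, which, summed over the edges at a vertex, gives $W_{x'}(g)+W_{x'}(h)\le W_{x'}(f_0)+W_{x'}(f_1)$ and so keeps the combined regularity of $g$ and $h$ under the control of $W(f)$.

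The main obstacle is precisely the reconciliation of the \emph{maximum} over vertices in the definition of $W$ with the \emph{sum} over directions inside each local budget: the induction hypothesis yields $\operatorname{osc}(g)\le W(g)$ with the right-hand side realized at the vertex where the regularity of $g$ peaks, whereas the vertical jump one must add is carried by the (generally different) vertex where $h$ is minimal, and these must somehow be made to meet a single facet budget $W_{x'}(f_i)+j(x')\le W(f)$. That this gap cannot be closed by a soft argument is shown by the failure of the analogous statement on a path graph (where $f=0,1,\dots,N$ has oscillation $N$ but maximal local budget $2$): the proof must genuinely exploit the high connectivity of the cube, namely that each vertex meets $N$ neighbours, so that a large oscillation is forced to produce a comparably large budget in at least one direction at some single vertex. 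I expect the cleanest route to combine the sorting inequality above with an averaging of the two telescoping bounds over the $N$ choices of splitting coordinate, using that $\tfrac1N\sum_i j_i(\boldsymbol 0)=\tfrac1N W_{\boldsymbol 0}(f)$ to absorb the residual jump terms.
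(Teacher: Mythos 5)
Your reduction to the antipodal case is sound (and is exactly what the paper's ``without loss of generality'' silently performs), and your path-graph remark correctly identifies that any proof must use the high connectivity of the cube. But the inductive step in the antipodal case --- which is the entire content of the lemma --- is not closed, and you say so yourself. The two telescoping bounds you write down each mix quantities attached to \emph{different} vertices: the induction hypothesis controls $\operatorname{osc}(g)$ by $W(g)=\max_{x'}W_{x'}(g)$, realized at some vertex $z'$, while the jump you must add is $j(x')$ at the vertex $x'$ where $h$ is minimized (or $g$ maximized); the only single-vertex quantities dominated by $W(f)$ are $W_{x'}(f_i)+j(x')$, and nothing in your setup forces $z'=x'$. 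The sorting inequality gives $W_{x'}(g)+W_{x'}(h)\le W_{x'}(f_0)+W_{x'}(f_1)$, whose right-hand side is only bounded by $2W(f)$ (minus jump terms), so following this route literally loses a factor of $2$. The closing suggestion --- averaging the two bounds over the $N$ choices of splitting coordinate and using $\frac1N\sum_i j_i(0,\dots,0)=\frac1N W_{(0,\dots,0)}(f)$ --- is not carried out, and it is not clear it can be: the $\operatorname{osc}(g)$ terms produced by the $N$ different splittings are themselves maxima over $N$ different $(N-1)$-cubes, and there is no mechanism offered to recombine them into a single local budget. As it stands the argument proves at best $\max f-\min f\le 2W(f)$ (or a statement with an $N$-dependent loss), not the lemma.

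For comparison, the paper's proof (Petrov's) avoids induction entirely and performs the averaging globally: normalize $W(f)\le 1$, $f(0,\dots,0)=0$, and sum the costs $\sum_i\lvert f(x^{i+1})-f(x^i)\rvert$ over all $N!$ monotone paths from $(0,\dots,0)$ to $(1,\dots,1)$. Each edge between levels $i$ and $i+1$ lies on exactly $i!\,(N-i-1)!=p_i+p_{i+1}$ paths, with $p_i=\frac{i!\,(N-i)!}{N+1}$, and attributing the $p_i$ part to the lower endpoint and the $p_{i+1}$ part to the upper endpoint bounds the total by $\sum_x p_{\lvert x\rvert}W(f)\le N!$. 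Hence some path has cost at most $1$, and the triangle inequality along that path concludes. The choice of the weights $p_i$ is precisely the device that reconciles the max-over-vertices in $W$ with the sum-over-directions in each local budget --- the obstacle you correctly identified but did not overcome. If you want to salvage your facet-splitting induction you would need to find the analogous exact bookkeeping; simply averaging over the $N$ splitting directions is too coarse.
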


\begin{proof}
Without lost of generality, we can assume $W(f)\le 1$ and $f(0,0,\dots, 0)=0$, and reduce to proving $f(1,1,\dots,1)\le 1$.

Define the \emph{cost} of a path $x^0,x^2,\dots,x^k$ as the number $\sum_{i=0}^{k-1} \lvert f(x^{i+1})-f(x^i) \rvert$, and let $\Sigma$ be the sum of the costs of all paths of length $N$ from $(0,0,\dots,0)$ to $(1,1,\dots,1)$.
We shall prove that $\Sigma\le N!$, and since there are $N!$ such paths one of them will have cost at most $1$, proving the lemma.

We call ``level'' of $x\in\{0,1\}^N$ the number of $1$s among the coordinates of $x$, and denote it by $\lvert x\rvert$.
For each $i\in\{0,1,\dots,N-1\}$, define $p_i= \frac{i!(N-i)!}{N+1}$. Then all $p_i$ are positive and $p_i+p_{i+1}=i!(N-i-1)!$ is precisely the number of paths that use any given edge from level $i$ to level $i+1$.

The contribution to $\Sigma$ of an edge $(x,x+\epsilon)$ from level $i$ to level $i+1$ is thus $i!(N-i-1)!\lvert f(x+\epsilon)-f(x) \rvert$, which we split into two parts, one $p_i \lvert f(x+\epsilon)-f(x) \rvert$ attributed to $x$ and the other $p_{i+1}\lvert f(x+\epsilon)-f(x) \rvert$ to $x+\epsilon$. It follows 
\[\Sigma \le \sum_{x\in\{0,1\}^N} p_{\lvert x\rvert} W(f) \le \sum_{i=0}^N p_i \binom{N}{i} = \sum_{i=0}^{N-1}(p_i+p_{i+1})\binom{N-1}i=N(N-1)!=N!\]
as desired. 
\end{proof}

We get the following gap estimates.
\begin{theo}\label{theo:Hamming}
Each of the norm $\lVert\cdot\rVert_L$, $\lVert\cdot\rVert_{dL}$ and $\lVert\cdot\rVert_{W}$ turns the space of all functions $\{0,1\}^N\to\mathbb{R}$ into a Banach algebra where $\one$ has norm $1$.

Moreover the averaging operator $\op{L}_0$ of the transition kernel $\mchain{M}$ has operator norm $1$, and is contracting with gap respectively $1/N^2$, $1/(2N-1)$ and $1/(4N-1)$ in the norms $\lVert\cdot\rVert_L$, $\lVert\cdot\rVert_{dL}$ and $\lVert\cdot\rVert_{W}$.
\end{theo}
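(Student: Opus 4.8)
The plan is to prove Theorem~\ref{theo:Hamming} in two stages: first verify the algebraic properties (Banach algebra with $\lVert\one\rVert=1$ and $\lVert\op{L}_0\rVert=1$) for each of the three norms, then establish the quantitative contraction gaps by reducing each case to Lemma~\ref{lemm:gap}. For the algebra property, the key observation is that all three norms have the shape $\lVert\cdot\rVert_\infty + V(\cdot)$ with $V$ a seminorm, so it suffices to check the submultiplicativity $\lVert fg\rVert \le \lVert f\rVert\lVert g\rVert$; this follows from the product-rule-type estimate $V(fg)\le \lVert f\rVert_\infty V(g) + \lVert g\rVert_\infty V(f)$, which one checks directly for each seminorm ($\Lip$, $N\Lip$, and $W$) using $\lvert f(x)g(x)-f(y)g(y)\rvert \le \lvert f(x)\rvert\,\lvert g(x)-g(y)\rvert + \lvert g(y)\rvert\,\lvert f(x)-f(y)\rvert$. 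The constant functions have zero seminorm, so $\lVert\one\rVert = \lVert\one\rVert_\infty = 1$.

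To apply Lemma~\ref{lemm:gap} I need, for each norm, two ingredients: a contraction estimate $V(\op{L}_0 f)\le \theta V(f)$ on the regularity seminorm, together with the Poincar\'e-type inequality $\lVert f\rVert_\infty \le C\,V(f)$ for mean-zero $f$; the resulting gap is $\delta_0 = (1-\theta)/(1+C\theta)$. The heart of the matter is therefore computing the contraction factor $\theta$ for the lazy walk on each seminorm. First I would compute how $\op{L}_0$ acts on nearest-neighbor differences. Because the kernel resamples one coordinate uniformly chosen among $N$, an explicit calculation of $\op{L}_0 f(x+\epsilon) - \op{L}_0 f(x)$ along an edge in direction $\epsilon$ should show that the coordinate being resampled kills the difference in that direction while the other $N-1$ coordinates are transported with the $\tfrac12$ laziness factor. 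I expect this to yield $\Lip(\op{L}_0 f)\le (1-\tfrac{1}{2N})\Lip(f)$, hence $\theta = 1 - \tfrac{1}{2N}$ for both $\lVert\cdot\rVert_L$ and $\lVert\cdot\rVert_{dL}$ (the weight $N$ on $\Lip$ does not change the contraction factor, only the constant $C$), and an analogous $W(\op{L}_0 f)\le (1-\tfrac{1}{2N})W(f)$ for the local-total-variation seminorm.

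For the constant $C$ in the Poincar\'e inequality, I would use that a mean-zero function on $\{0,1\}^N$ takes both signs, so $\lVert f\rVert_\infty \le \max f - \min f \le \diam\cdot\Lip(f) = N\,\Lip(f)$ for the Lipschitz seminorm, giving $C=N$ for $\lVert\cdot\rVert_L$, but $C=1$ for $\lVert\cdot\rVert_{dL}$ since there the seminorm already carries the factor $N$. For the $W$-norm the corresponding bound $\max f - \min f \le W(f)$ is exactly Lemma~\ref{lemm:Petrov}, so $C=1$. Substituting into $\delta_0 = (1-\theta)/(1+C\theta)$ with $\theta = 1-\tfrac{1}{2N}$ should produce, after simplification, the three claimed gaps: $C=N$ gives $\tfrac{1/(2N)}{1+N(1-1/(2N))} = \tfrac{1}{N^2+N} $, which I need to reconcile with the stated $1/N^2$ (likely the stated value is the cleaner lower bound $\ge 1/N^2$ being slightly off, or the laziness factor differs—this reconciliation is the point I would check most carefully); $C=1$ on $\lVert\cdot\rVert_{dL}$ gives $\tfrac{1/(2N)}{2-1/(2N)} = \tfrac{1}{4N-1}$, and $C=1$ on $\lVert\cdot\rVert_W$ gives the same form. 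The main obstacle I anticipate is getting the exact contraction factors $\theta$ right for each seminorm—particularly the $W$-seminorm, where the supremum over $x$ of a sum over directions interacts nontrivially with resampling—and then matching the resulting $(1-\theta)/(1+C\theta)$ to the precise constants $1/N^2$, $1/(2N-1)$, $1/(4N-1)$ claimed in the statement, which suggests the individual $\theta$ and $C$ values may differ slightly from my first guess and must be pinned down by the explicit edge computation.
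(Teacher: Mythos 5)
Your overall strategy is exactly the paper's: verify the Banach algebra property via the product rule $V(fg)\le \lVert f\rVert_\infty V(g)+V(f)\lVert g\rVert_\infty$, then feed a contraction factor $\theta$ for each seminorm and a constant $C$ from the mean-zero Poincar\'e inequality (with $C=N$, $1$, $1$ respectively, and Lemma~\ref{lemm:Petrov} supplying the $W$ case) into Lemma~\ref{lemm:gap}. The one point you flag for reconciliation is resolved as follows: your provisional $\theta=1-\tfrac{1}{2N}$ for the Lipschitz seminorm is wrong; the correct factor is $\theta=1-\tfrac1N$ (Ollivier curvature $\kappa=1/N$, which the paper simply quotes as well known). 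The edge computation shows why: for an edge in direction $\eta$, the term $\epsilon=\eta$ in the sum satisfies $f(x+2\eta)=f(x)$, so it \emph{cancels} part of the lazy term rather than vanishing, leaving a coefficient $\tfrac12-\tfrac{1}{2N}=\tfrac{N-1}{2N}$ on the difference along $\eta$ itself, while the $N-1$ translated edges contribute another $\tfrac{N-1}{2N}$, for a total of $\tfrac{N-1}{N}$. With $\theta=1-\tfrac1N$ the formula $(1-\theta)/(1+C\theta)$ gives exactly $1/N^2$ for $C=N$ and $1/(2N-1)$ for $C=1$, so no rounding or ``cleaner lower bound'' is involved. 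Your guess $\theta=1-\tfrac{1}{2N}$ is, however, the correct factor for the $W$-seminorm (there the paper does carry out the explicit supremum-over-$x$ computation you anticipate), and combined with $C=1$ from Lemma~\ref{lemm:Petrov} it yields $1/(4N-1)$ as you computed. So once the Lipschitz contraction factor is corrected, your proof coincides with the paper's in every step.
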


\begin{proof}
Each norm considered here has the form $\lVert \cdot \rVert = \lVert \cdot \rVert_\infty + V(\cdot)$ for some semi-norm $V$ such that 
$V(fg) \le \lVert f\rVert_\infty V(g) + V(f)\lVert f\rVert_\infty$; it follows that the considered spaces are Banach algebras. All the other properties but the contraction are trivial.

To prove the contraction, we simply apply Lemma \ref{lemm:gap}. 
First, it is well-known that  for all $\varphi:\{0,1\}^N \to \mathbb{R}$,
\[\Lip(\op{L}_0\varphi) \le (1-1/N) \Lip(\varphi)\] 
(in the parlance of \cite{Ollivier}, $\mchain{M}$ is positively curved with $\kappa=1/N$).

In the case of $\lVert\cdot\rVert_L$, we get $\theta=1-1/N$ and $C=N$ (since a function of vanishing average must take positive and negative values, and $\diam \{0,1\}^N=N$), hence a contraction with gap $1/N^2$. In the case of $\lVert\cdot\rVert_{dL}$, the normalizing factor gives $C=1$ (and we still have $\theta=1-1/N$), hence a spectral gap of size $1/(2N-1)$.

To deal with $\lVert\cdot \rVert_W$, we first show that in Lemma \ref{lemm:gap} we can take  $\theta=1-1/(2N)$.
\begin{align*}
W(\op{L}_0\varphi) &= \sup_x \sum_{\epsilon\in E} \bigg\lvert \frac{1}{2}\varphi(x+\epsilon)+\frac{1}{2N}\sum_{\eta\in E} \varphi(x+\eta+\epsilon)-\frac{1}{2}\varphi(x)-\frac{1}{2N}\sum_{\eta\in E} \varphi(x+\eta) \bigg\rvert\\
  &= \sup_x \sum_{\epsilon\in E} \bigg\lvert \Big(\frac{1}{2}-\frac{1}{2N}\Big) \varphi(x+\epsilon) +\frac{1}{2N}\sum_{\eta\neq \epsilon} \varphi(x+\eta+\epsilon) \\
  &\qquad\qquad\qquad\qquad - \Big(\frac{1}{2}-\frac{1}{2N}\Big) \varphi(x)- \frac{1}{2N}\sum_{\eta\neq \epsilon} \varphi(x+\eta) \bigg\rvert \\
  &\le \sup_x \frac{N-1}{2N} \sum_{\epsilon\in E} \lvert \varphi(x+\epsilon)-\varphi(x) \rvert + \frac{1}{2N}\sum_{\epsilon\in E}\sum_{\eta\neq \epsilon} \lvert \varphi(x+\epsilon+\eta)-\varphi(x+\eta) \rvert \\
  &\le \frac{N-1}{2N} W(\varphi)+ \frac{1}{2N}\sup_x \sum_{y\sim x} \sum_{\epsilon\in E} \lvert \varphi(y+\epsilon)-\varphi(y) \rvert.
\end{align*}
Hence we obtain $W(\op{L}_0\varphi)  \le \big(1-\frac{1}{2N}\big)W(\varphi)$.

Then Lemma \ref{lemm:Petrov} shows that we can take $C=1$, providing a spectral gap of size $1/(4N-1)$.
\end{proof}

\subsection{Concentration inequalities}

Let us combine \ref{theo:Hamming} with \ref{theo:main-conc} and \ref{theo:main-second} to obtain explicit concentration estimates. We will not compute the explicit constants, and concentrate on the dependency with the parameters $a$ and $N$.

Consider first the ``polarization'' observable $\rho:\{0,1\}^N \to \mathbb{R}$, where $\rho(x)$ is the proportion of $1$'s in the word $x$. We have
\[\lVert \rho \rVert_L = 1+\frac1N, \qquad \lVert \rho \rVert_{dL} = 2, \qquad \lVert \rho \rVert_W = 2.\]
To use Theorem \ref{theo:main-conc} with optimal efficiency, assuming $a$ will be small enough, we need to maximize $\delta_0/\lVert \rho\rVert^2$. Here, we shall thus use the norm $\lVert\cdot\rVert_{dL}$. For $a\lesssim N$, Theorem \ref{theo:main-conc} shows that we need at most $O(N/a^2)$ iterations to have a good convergence to the actual mean; meanwhile Joulin and Ollivier only need $O(1/a^2)$, but for concentration around the expectancy of the empiric process, not around the expectancy with respect to the stationary measure. Without burn-in, one also needs to bound the bias, which approaches zero in time $O(N/a)$ according to the bound of Joulin and Ollivier, for a total run time of $O(N/a+1/a^2)$. 
With burn-in, they need a run time of $O(N+1/a^2)$.

For $1/N \lesssim a\lesssim 1$, we enter our exponential regime while staying inside Joulin-Ollivier's Gaussian window; Theorem \ref{theo:main-conc} shows we need no more than $O(N^2/a)$ iterations, while \cite{JO} still gives a bound of $O(N + 1/a^2)$.

In this example, Joulin and Ollivier get a sharper result; this seems to be explained in one part by the fact that we do not get to decouple the bias from the convergence of expectancies, and in another part by our need to have a Banach algebra, hence to include the uniform norm in our norm.

Consider now the potential $\one_S$, the indicator function for a (non-trivial) set $S$. This function is only $1$-Lipschitz, so that we have $\lVert \one_S \rVert_L=2$ and $\lVert \one_S\rVert_{dL} = 1+N$. If we insist on using a Lipschitz norm, the unormalized one is thus better and with $\delta_0=1/N^2$ Theorem \ref{theo:main-conc} shows that we need (in the Gaussian regime) $O(N^2/a^2)$ iterations to ensure the error is probably less than $a$, which is the same order of magnitude than given by \cite{JO} with a worse constant, $\sim 34$ instead of $8$.
But here we have two ways to improve on this bound.

The first one is to use Theorem \ref{theo:main-second}.
When $S=[0]:=\{0x_2x_3\cdots x_N \in\{0,1\}^N \}$,
the dynamical variance can be computed explicitly
(distinguish the cases when the first digit has been changed an odd or even number of times, and observe that at each step the probability of changing the first digit is $1/2N$):
\[\mu_0(\one_{[0]}^2)-(\mu_0\one_{[0]})^2=1/4 \quad\mbox{and}\quad \sum_{k\ge1} \mu_0(\one_{[0]} \op{L}_0^k \bar\one_{[0]}) = \frac14 \sum_{k\ge 1} \Big(\frac{N-1}{N}\Big)^k = \frac{N-1}{4}.\]
This gives $\sigma^2(\one_S)\simeq N/2$. Switching back to the norm $\lVert\cdot\rVert_{dL}$, when  $a\lesssim 1/N^2$ and $n\ge 60N^2$, in Theorem \ref{theo:main-second} the positive term in the exponential is negligible compared to the main term which is $-na^2/N$. In particular $O(N/a^2)$ iterations suffice to get a small probability for a deviation at least $a$: compared to Joulin and Ollivier, we gain one power of $N$ in this regime (and the optimal constant $1$ in the leading term of the rate) but only for very small values of $a$.\footnote{If we want to consider $a$ of the order of $1/N$, we can then take $U\simeq N^2$ to enlarge the window, at the cost of a weaker leading term. We get a bound similar to the one of Joulin-Ollivier, possibly with a smaller constant (depending on the value of $a$).}
This choice of $S$ might seem very specific, but for less regular $S$ the gain should be greater for sufficiently smaller $a$. For example, if $S$ contains half the vertices and every vertex $x\in\{0,1\}^N$ has exactly $2Np$ neighbors with the same $\one_S$ value, the above computation of variance gives $\sigma^2(\one_S)=\frac14+\frac{1-2p}{4p}$. We shall call a family of sets $S_N\in\{0,1\}^N$ ``scrambled'' when the indicator functions $\one_{S_N}$ have bounded variance (independently of $N$) with respect to the lazy random walk; by abuse, we shall speak of a scrambled set for a member of such a family. For scrambled sets taking $n=O(1/a^2)$ is sufficient: there is no dependency on the dimension. A further study of scrambled sets seems an interesting direction of work.

The second way to improve our first estimate is to use the norm $\lVert\cdot\rVert_W$ in Theorem \ref{theo:main-conc}. Then $\lVert \one_{[0]} \rVert_W=2$ and $\delta_0 \simeq 1/N$. For $a\lesssim 1/N$, Theorem \ref{theo:main-conc} ensures that we need only $O(N/a^2)$ iterations to have a good convergence to the actual mean, which is again the optimal order of magnitude (since it corresponds to the CLT) but obtained on a much larger window than with Theorem \ref{theo:main-second}. This extends to all observables with $W(\varphi)\lesssim 1$; observe that this domain of applicability is quite complementary to the domain of applicability of the previous paragraph.

\section{Bernoulli convolutions and observables of bounded variation}

We now consider the ``Bernoulli convolution'' of parameter $\lambda\in(0,1)$, defined as the law $\beta_\lambda$ of the random variable 
\[\sum_{k\ge 1} \epsilon_k \lambda^k\]
where the $\epsilon_k$ are independent variables taking the value $1$ with probability $1/2$ and the value $-1$ with probability $1/2$ (see \cite{K:concentration} for a brief account, and \cite{Peres2000} for more information on these measures, which are the object of intense scrutiny for decades).

\begin{figure}
\centering
\begin{subfigure}[t]{.333\linewidth}
  \centering
  \includegraphics[width=\linewidth]{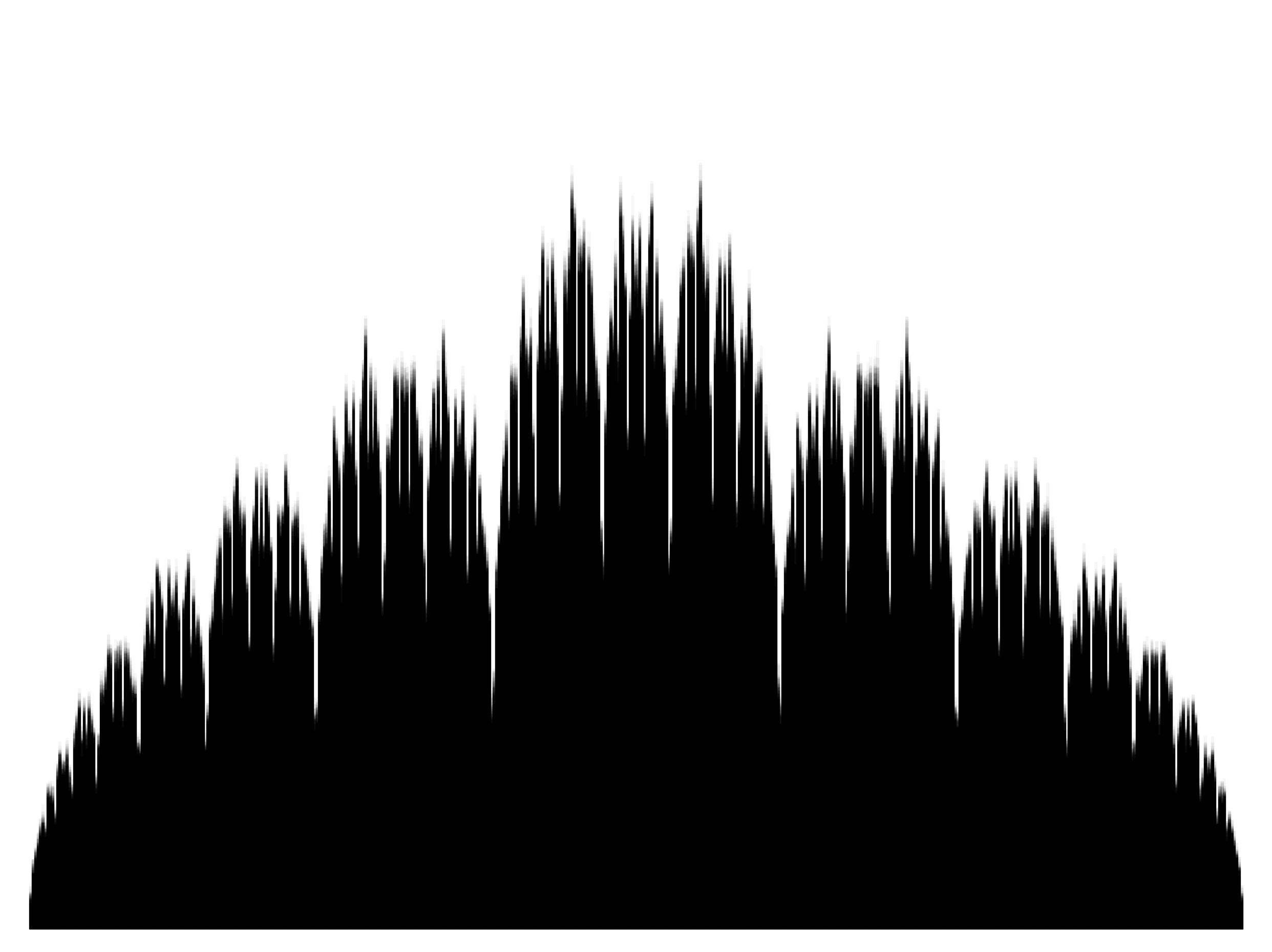} 
  \caption{$\lambda=\frac{\sqrt{5}-1}{2}\simeq 0.618$}
\end{subfigure}%
~
\begin{subfigure}[t]{.333\linewidth}
  \centering
  \includegraphics[width=\linewidth]{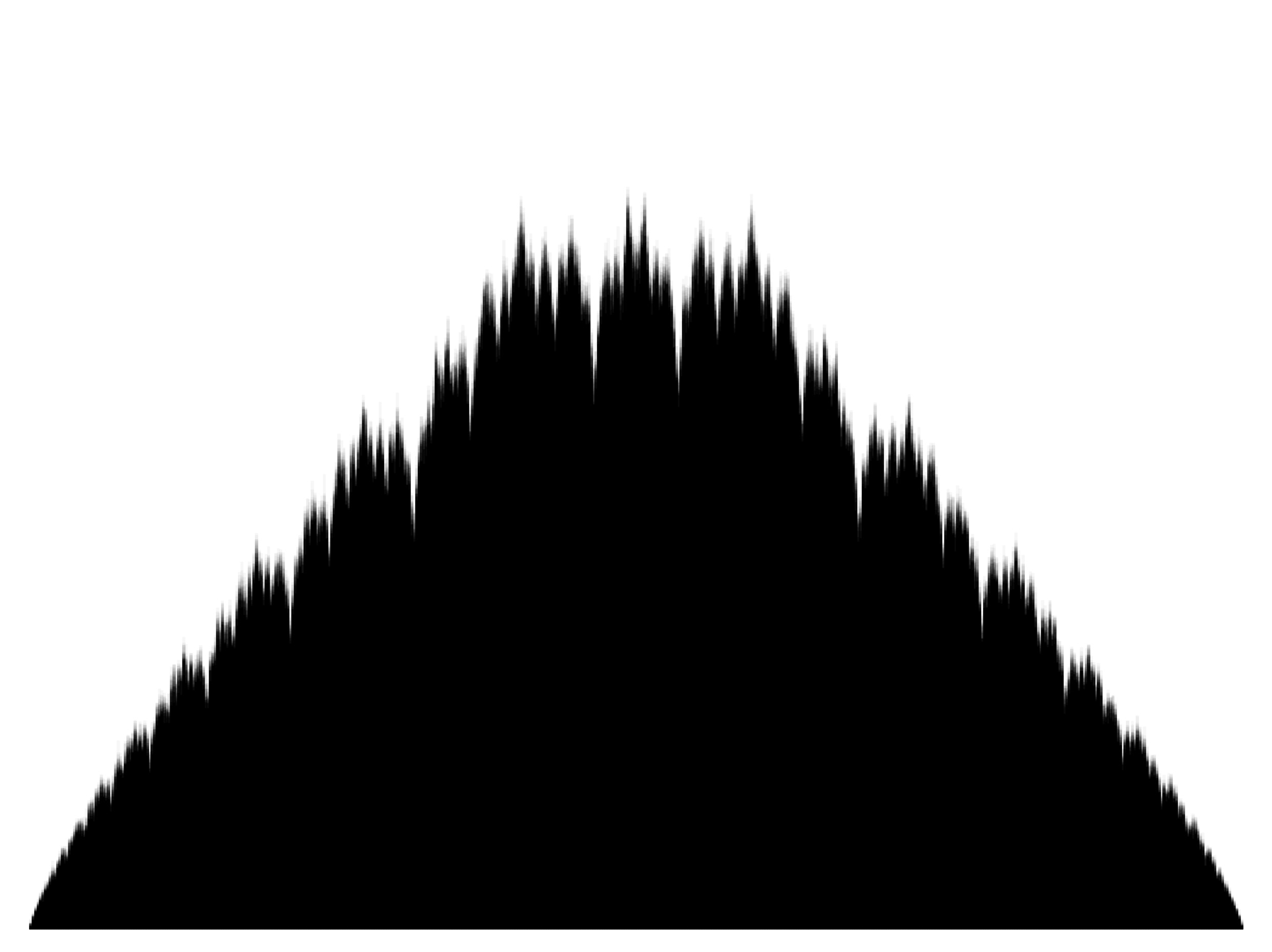}
  \caption{$\lambda=\frac{e}{4}\simeq 0.680$}
\end{subfigure}%
~
\begin{subfigure}[t]{.333\linewidth}
  \centering
  \includegraphics[width=\linewidth]{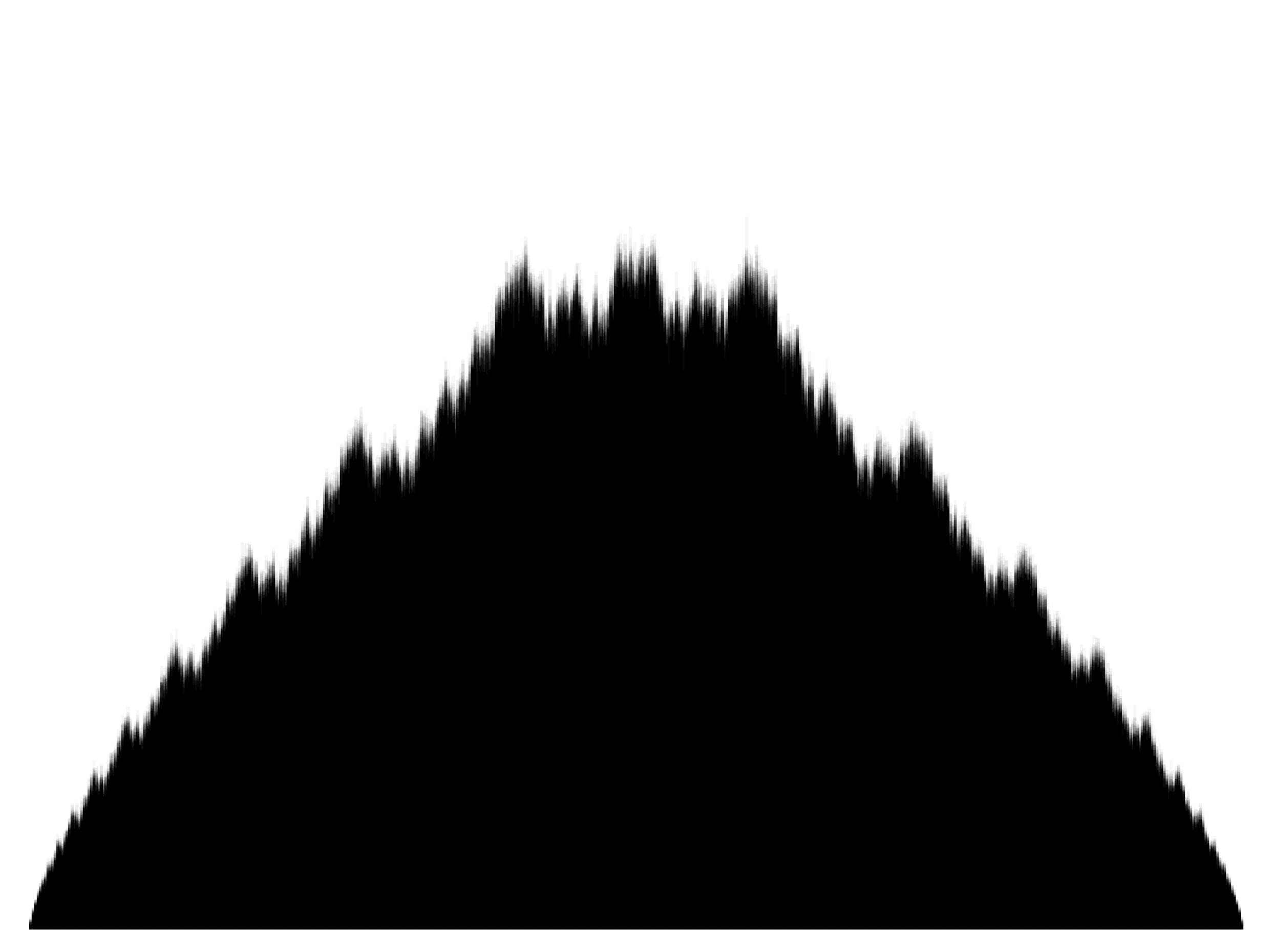}
  \caption{$\lambda=\frac23 \simeq 0.667$}
\end{subfigure}
\caption{Histogram of the empirical distribution of the Markov chain associated to $(T_0,T_1)$, with $X_0=0$, binned in $500$ subintervals (averaged image over $30$ independent runs of $10^6$ points each). Parameter $\lambda$ is the inverse of a Pisot number on the left, a very well approximable irrational at the center, rational on the right.}\label{fig:Bernoulli}
\end{figure}


One can realize naturally $\beta_\lambda$ as the stationary law of the Markov transition kernel $\mchain{M}=(m_x)_{x\in \mathbb{R}}$ defined by
\[m_x = \frac12 \delta_{T_0(x)}+ \frac12\delta_{T_1(x)}\]
where $T_0(x) = \lambda x-\lambda$ and $T_1(x)=\lambda x+\lambda$ (this is a particular case of an Iterated Function System).

In order to evaluate $\beta_\lambda(\varphi)$ by a MCMC method, one cannot use the methods developed for ergodic Markov chains since, conditionally to $X_0=x$, the law $m_x^k$ of $X_k$ is atomic and thus singular with respect to $\beta_\lambda$: $d_{\mathrm{TV}}(m_x^k,\beta_\lambda) = 1$ for all $k$. The convergence only holds for observables satisfying some regularity assumption, and it is natural to ask what regularity is needed.

For a Lipschitz observable $\varphi$ one only need to observe that $\mchain{M}$ has positive curvature in the sense of Ollivier (this is easy using the coupling $\frac12 \delta_{(T_0(x),T_0(y))} + \frac12 \delta_{(T_1(x),T_1(y))}$ of $m_x$ and $m_y$) and apply \cite{JO}. But what if $\varphi$ is not Lipschitz (or has large Lipschitz constant)?  We shall consider observables of bounded variation, a regularity which has the great advantage over Lipschitz to include the characteristic functions of intervals.

\begin{defi}
Given an interval $I\subset \mathbb{R}$, we consider the Banach space $\BV(I)$ of \emph{bounded variation} functions $I\to \mathbb{R}$, defined by the norm $\lVert\cdot\rVert_{\BV}=\lVert\cdot \rVert_\infty + \var(\cdot,I)$ where
\[\var(f,I) := \sup_{x_0<x_1<\dots<x_p \in I} \sum_{j=1}^p \lvert f(x_j)-f(x_{j-1})\rvert\]
(the uniform norm is usually replaced by the $L^1$ norm, but when $I$ is bounded our choice is equivalent up to a constant, it does not single out the Lebesgue measure, and most importantly it ensures that $\BV(I)$ is a Banach algebra).
\end{defi}

Important features of total variation are:
\begin{itemize}
\item its extensiveness:  $\var(f,I) \ge \var(f,J)+\var(f,K)$ whenever $J,K$ are disjoint subintervals of $I$,
\item its invariance under monotonic maps: $\var(f\circ T,I)=\var(f,T(I))$ whenever $T$ is monotonic.
\end{itemize}

It turns out that the averaging operator $\op{L}_0$ of the transition kernel $\mchain{M}$ has a spectral gap for all $\lambda$, but is not a contraction when $\lambda>1/2$ (i.e. we have an inequality $\lVert \op{L}_0^n(f) \rVert \le C(1-\delta_0)^n \lVert f\rVert$ on a closed hyperplane, but with $C>1$). In yet other words, an iterate of $\op{L}_0$ is a contraction, and to apply directly Theorems \ref{theo:main-conc} and \ref{theo:main-second} we need to consider an extracted Markov chain $(X_{\ell k})_{k\ge 0}$ for some $\ell$.

Let $I_\lambda$ be the attractor of the IFS $(T_0,T_1)$, i.e. the interval whose endpoints  are the fixed points of $T_0$ and $T_1$:
\[I_\lambda=\big[\frac{-\lambda}{1-\lambda},\frac{\lambda}{1-\lambda}\big].\]
Given a word $\omega = \omega_1\omega_2\dots\omega_k$ in the letters $0$ and $1$, we define
\[T_\omega = T_{\omega_1} \circ T_{\omega_2} \circ \dots \circ T_{\omega_k} : I_\lambda\to I_\lambda.\]

\begin{theo}
If $\lambda^\ell<\frac12$, then $\op{L}_0^\ell$ has a spectral gap on $\BV(I_\lambda)$ of size $1/(2^{\ell+1}-1)$ and constant $1$.
\end{theo}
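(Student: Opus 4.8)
The plan is to apply Lemma \ref{lemm:gap} to the iterate $\op{L}_0^\ell$, with regularity semi-norm $V=\var(\cdot,I_\lambda)$ and the Bernoulli convolution $\beta_\lambda$ as eigenprobability. Three of the four hypotheses of that lemma are quickly checked. For the constant, if $\mu(f)=0$ then $f$ takes both a non-negative and a non-positive value, so $\lVert f\rVert_\infty\le \sup f-\inf f\le \var(f,I_\lambda)$, giving $C=1$. Since $\op{L}_0$ averages values of $f$ we have $\lVert \op{L}_0^\ell f\rVert_\infty\le \lVert f\rVert_\infty$, and $(\op{L}_0^\ell)^*\beta_\lambda=\beta_\lambda$ because $\beta_\lambda$ is stationary for $\mchain{M}$. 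The whole content is therefore the contraction estimate $\var(\op{L}_0^\ell f,I_\lambda)\le\theta\,\var(f,I_\lambda)$ with $\theta=1-2^{-\ell}$: feeding this and $C=1$ into the lemma yields a gap $\frac{1-\theta}{1+\theta}=\frac{1}{2^{\ell+1}-1}$, and since the lemma produces a genuine contraction (no prefactor), this is exactly the ``constant $1$'' of the statement.

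To get the variation estimate I would write $\op{L}_0^\ell f(x)=2^{-\ell}\sum_{|\omega|=\ell}f(T_\omega(x))$ and use that each $T_\omega$ is an increasing affine map of $I_\lambda$ into itself, so that invariance of the variation under monotone reparametrisation gives $\var(f\circ T_\omega,I_\lambda)=\var(f,T_\omega(I_\lambda))$. Hence $\var(\op{L}_0^\ell f,I_\lambda)\le 2^{-\ell}\sum_{|\omega|=\ell}\var(f,T_\omega(I_\lambda))$, and everything reduces to bounding the sum of the variations of $f$ over the $2^\ell$ cylinders $T_\omega(I_\lambda)$. The tool for this is an elementary covering bound: if finitely many subintervals of $I_\lambda$ cover each point with multiplicity at most $m$, then refining to the common partition induced by their endpoints and using additivity of $\var$ over adjacent intervals gives $\sum_i\var(f,J_i)\le m\,\var(f,I_\lambda)$. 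So it suffices to show that the $2^\ell$ cylinders have covering multiplicity at most $2^\ell-1$, i.e. that they share no common point.

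The main point — and the only place the hypothesis $\lambda^\ell<\tfrac12$ enters — is this geometric fact. Each cylinder $T_\omega(I_\lambda)$ is the interval centred at $T_\omega(0)=\sum_{k=1}^\ell \pm\lambda^k$ (the sign in position $k$ read off from $\omega$) of radius $\lambda^\ell\cdot\frac{\lambda}{1-\lambda}=\frac{\lambda^{\ell+1}}{1-\lambda}$. The extreme centres, obtained from the constant words $0^\ell$ and $1^\ell$, are $\mp\frac{\lambda-\lambda^{\ell+1}}{1-\lambda}$, so the two extreme cylinders $T_{0^\ell}(I_\lambda)$ and $T_{1^\ell}(I_\lambda)$ are disjoint exactly when the half-distance $\frac{\lambda-\lambda^{\ell+1}}{1-\lambda}$ between their centres exceeds their common radius $\frac{\lambda^{\ell+1}}{1-\lambda}$, which simplifies to $\lambda^\ell<\tfrac12$. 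Under this hypothesis no point lies in both extreme cylinders, a fortiori in all $2^\ell$ of them, so the multiplicity is at most $2^\ell-1$ and $\sum_{|\omega|=\ell}\var(f,T_\omega(I_\lambda))\le(2^\ell-1)\var(f,I_\lambda)$. Dividing by $2^\ell$ gives $\theta=1-2^{-\ell}$ and the announced gap. I expect the disjointness of the extreme cylinders (equivalently, the sharp multiplicity bound) to be the crux; the reduction to it and the application of Lemma \ref{lemm:gap} are routine.
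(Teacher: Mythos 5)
Your proof is correct and follows essentially the same route as the paper: reduce to bounding $\sum_{\omega}\var(f,T_\omega(I_\lambda))$ via monotonicity of the $T_\omega$, observe that $\lambda^\ell<\tfrac12$ forces the two extreme cylinders $T_{0^\ell}(I_\lambda)$ and $T_{1^\ell}(I_\lambda)$ to be disjoint, deduce the factor $2^\ell-1$, and feed $\theta=1-2^{-\ell}$, $C=1$ into Lemma~\ref{lemm:gap}. The only cosmetic differences are that you package the counting step as a general covering-multiplicity bound (the paper applies extensiveness directly to the two disjoint extreme cylinders and bounds the rest trivially) and that you verify the disjointness by computing centres and radii rather than by the length-plus-endpoints argument; both yield exactly the paper's estimate.
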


\begin{proof}
Let $I_\lambda^-, I_\lambda^+$ be the left and right halves of $I_\lambda$, i.e.
$I_\lambda^-=\big[\frac{-\lambda}{1-\lambda},0\big)$ and $I_\lambda^+ = \big(0,\frac{\lambda}{1-\lambda}\big]$.

Let $f\in\BV(I_\lambda)$ and observe that the condition $\lambda^\ell<\frac12$ ensures that $T_{00\dots 0}(I_\lambda)$ and $T_{11\dots 1}(I_\lambda)$ are disjoint  (they have length $<\frac12\lvert I_\lambda\rvert$ and each contains an endpoint of $I_\lambda$). Then:
\begin{align*}
\var(\op{L}_0^\ell f,I_\lambda) &\le \frac{1}{2^\ell} \sum_{\omega\in\{0,1\}^\ell} \var(f\circ T_\omega,I_\lambda) 
  \le \frac{1}{2^\ell} \sum_{\omega\in\{0,1\}^\ell} \var(f,T_\omega(I_\lambda))\\
  &\le \frac{1}{2^\ell} \Big( \var(f,T_{00\dots0}(I_\lambda))+ \var(f,T_{11\dots1}(I_\lambda) + \sum_{\substack{\omega \neq 00\dots0 \\ \phantom{\omega}\neq 11\dots1}} \var(f,I_\lambda) \Big)\\
  &\le \frac{1}{2^\ell}\big( \var(f,I_\lambda)+(2^\ell-2)\var(f,I_\lambda) \big) \\
\var(\op{L}_0^\ell f,I_\lambda)  &\le (1-2^{-\ell}) \var(f,I_\lambda).
\end{align*}
Applying Lemma \ref{lemm:gap} with $C=1$ and $\theta=1-2^{-\ell}$ yields the claim.
\end{proof}

This enables us to apply our result to estimate $\beta_\lambda(\varphi)$ for any $\varphi$ of bounded variation. For example, Theorem \ref{theo:main-conc} yields the following.
\begin{coro}
Let $\lambda\in (\frac12,1)$ and let $\ell$ be an integer such that $\lambda^\ell < \frac12$. Consider a Markov chain $(X_k)_{k\ge0}$ with transition probability $2^{-\ell}$ from $x\in I_\lambda$ to $T_\omega(x)$, for each $\omega\in\{0,1\}^\ell$. For any starting distribution $X_0 \sim \mu$, any $\varphi\in\BV(I_\lambda)$, 
any positive $a <\lVert \varphi\rVert_{\BV}/3(2^{\ell+1}-1)$ and any $n\ge 120\cdot 2^\ell$
 we have
\[ \pr_\mu\Big[\lvert\hat\mu_n(\varphi)-\mu_0(\varphi)\rvert\ge a\Big]    \le 2.488 \exp\Big(- \frac{n a^2}{\lVert\varphi\rVert_{\BV}^2 (16.65 \cdot 2^\ell + 5.12)} \Big).\]
\end{coro}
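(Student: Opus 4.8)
The plan is to recognize that the chain described in the statement—with transition probability $2^{-\ell}$ from $x$ to $T_\omega(x)$ for each $\omega\in\{0,1\}^\ell$—is precisely the $\ell$-step extraction of the Bernoulli-convolution chain, so that its averaging operator is $\op{L}_0^\ell$. Since we have just shown that $\op{L}_0^\ell$ is a genuine contraction (constant $1$) on $\BV(I_\lambda)$ with gap $\delta_0=1/(2^{\ell+1}-1)$, the whole argument reduces to applying Theorem \ref{theo:main-conc} to this operator, once the standing hypotheses are checked and the correct regime is identified.

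First I would verify Assumptions \ref{hypo:X} and \ref{hypo:L}. The norm $\lVert\cdot\rVert_{\BV}=\lVert\cdot\rVert_\infty+\var(\cdot,I_\lambda)$ dominates the uniform norm, contains the constants with $\lVert\one\rVert_{\BV}=1$ (as $\var(\one,I_\lambda)=0$), and is a Banach algebra by the remark in the definition of $\BV(I_\lambda)$; this is Assumption \ref{hypo:X}. Assumption \ref{hypo:L} is exactly the content of the preceding theorem: $\op{L}_0^\ell$ is bounded with operator norm $1$ and is contracting with gap $\delta_0=1/(2^{\ell+1}-1)$ on the closed hyperplane $\ker\mu_0$, with constant $1$ so that the contraction is honest rather than merely asymptotic—this is what lets us invoke Theorem \ref{theo:main-conc} directly on the extracted chain without further extraction.

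Next I would check the two numerical preconditions. The hypothesis $a<\lVert\varphi\rVert_{\BV}/3(2^{\ell+1}-1)$ reads exactly $a/\lVert\varphi\rVert_{\BV}<\delta_0/3$, placing us in the first (Gaussian) branch of the bound. For the lower bound on $n$, it suffices to have $n\ge 60/\delta_0$; since $60/\delta_0=60(2^{\ell+1}-1)\le 120\cdot 2^\ell$, the hypothesis $n\ge 120\cdot 2^\ell$ suffices.

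Finally I would substitute $\delta_0=1/(2^{\ell+1}-1)$ into the Gaussian branch, computing the exponent's coefficient as
\[\frac{\delta_0}{13.44\,\delta_0+8.324}=\frac{1}{13.44+8.324\,(2^{\ell+1}-1)}=\frac{1}{16.648\cdot 2^\ell+5.116},\]
and rounding the constants up to $16.65$ and $5.12$ to obtain the stated bound with prefactor $2.488$. I expect no genuine obstacle: all the mathematical content sits in the preceding theorem, and the corollary is a bookkeeping exercise in matching the regime and tracking constants. The only point requiring care is to round in the direction that preserves the inequality—enlarging the denominator inside the exponential and keeping the prefactor above its exact value—so that the displayed estimate remains valid.
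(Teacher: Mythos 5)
Your proposal is correct and matches the paper's (implicit) argument exactly: the paper states this corollary as a direct application of Theorem \ref{theo:main-conc} with $\delta_0=1/(2^{\ell+1}-1)$ from the preceding theorem, and your verification of the hypotheses, identification of the Gaussian branch via $a/\lVert\varphi\rVert_{\BV}<\delta_0/3$, the check $60/\delta_0\le 120\cdot 2^\ell$, and the computation $13.44+8.324(2^{\ell+1}-1)=16.648\cdot 2^\ell+5.116$ rounded up are all as intended. Nothing is missing.
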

To the best of our knowledge, this example could not be handled effectively by previously known results. For example \cite{Gomez-Dartnell2012} needs the observable to be at least $C^2$ to have explicit estimates, and they do not give a concentration inequality.

\bibliographystyle{amsalpha}
\bibliography{concentration}
\end{document}